 \newtheorem{thm}{Theorem}[section]
 \newtheorem{lem}[thm]{Lemma}
 \newtheorem{prop}[thm]{Proposition}
 \theoremstyle{definition}
 \newtheorem{defn}[thm]{Definition}
 \theoremstyle{remark}
 \numberwithin{equation}{section}
\newcommand{\eqn}[2]{\begin{equation} \label{eq:#2} #1 \end{equation}}
\newcommand{\qqqquad}{\qquad \qquad \qquad \qquad}
\newcommand{\ld}{\backslash} 
\newcommand{\rd}{/} 
\newcommand{\ldd}{\backslash\! \!\backslash} 
\newcommand{\rdd}{/ \! \! /} 
\newcommand{\gD}{\Delta}
\newcommand{\op}[1]{\bigcirc\!\!\!\!\!\!{#1}\;\:}
\def\nn1{\operatorname{NewName'}}
\newcommand\mj{\mbox{\bf 1}}
\newcommand\str{\rightarrow}
\newcommand\strt{\stackrel{\textbf{.}\,}{\rightarrow}}
\begin{document}

\setcounter{page}{1}                                       

\begin{center}
\vspace*{2pt}
{\Large \textbf{A note on semisymmetry}}\\[3mm]
{\large \textsf{\emph{Aleksandar Krape\v{z} and Zoran Petri\'c}}}
\\[30pt]
\end{center}
\textbf{Abstract.} \\ A survey of properties of the adjunction
involving a semisymmetrization functor, which was suggested by
J.D.H.\ Smith, and which maps the category of quasigroups with
homotopies to the category of semisymmetric quasigroups with
homomorphisms, is given. A new semisymmetrization functor is
suggested. This functor maps a quasigroup to its square instead to
its cube as it was the case with the former functor.
\footnote{\textsf{2010 Mathematics Subject Classification:} 20A05,
18A40, 18A22, 18C15 } \footnote{\textsf{Keywords:} quasigroup,
semisymmetrization, semisymmetric quasigroup, homotopy, isotopy,
variety, category, functor, adjunction, embedding, monadic
adjunction} \vspace*{5mm}

\section{Introduction}

For a plausible category of quasigroups, it seems that homotopies
between quasigroups, taken as morphisms, are better choice than
homomorphisms (see \cite{Gvaramiya}). However, homomorphisms are
sometimes easier to work with. For example, isotopies do not
preserve units---every quasigroup is isotopic to a loop
(quasigroup with a unit) but is not necessarily a loop itself.
This note is about turning homotopies into homomorphisms.

Smith, \cite{Smith}, proved that there is an adjunction from the
category of semisymmetric quasigroups with homomorphisms to the
category of quasigroups with homotopies. Also, he proved in
\cite{Smith} that the latter category is isomorphic to a
subcategory of the former category, and in \cite{Smith8}, that
every $T$ algebra, for $T$ being the monad defined by the above
adjunction, is isomorphic to the image of a semisymmetric
quasigroup under the comparison functor.

These results, especially the embedding of the category of
quasigroups with homotopies into the category of semisymmetric
quasigroups with homomorphisms, could be of interest to a working
universal algebraist. Our intention is to make them more
accessible to such a reader and to indicate a possible misusing.
Also, we make some alternative proofs and add the fullness of the
comparison functor in order to complete the picture of this
adjoint situation.

At the end of the paper, we show that there is a more economical
way to embed the category of quasigroups with homotopies into the
category of semisymmetric quasigroups with homomorphisms. One
could get an impression, due to \cite{Smith}, that for such an
embedding it is necessary to have a semisymmetrization functor
that is a right adjoint in an adjunction. If one is interested
just in this embedding and not in reflectivity (see the end of
Section \ref{adj}), then this new semisymmetrization suits as any
other.

We assume that the reader is familiar with the notions of
category, functor and natural transformation. If not, we suggest
to consult \cite{ML71} for these notions. All other relevant
notions from Category theory are introduced at the appropriate
places in the text.

\section{Quasigroups}\label{Q}

We start by
recapitulating a few basic facts about quasigroups. 

One way to define a \emph{quasigroup} is that it is a grupoid $(Q;
\cdot)$ satisfying:
\begin{equation}
\forall{a b} \exists_{1}x (x \cdot a = b) \qquad \quad \text{ and
} \qquad \quad \forall{a b} {\exists}_1 x (a \cdot x = b) \notag
\end{equation}
Uniqueness of the solution of the equation $x\cdot a = b \;
(a\cdot x = b)$ enables one to define {right (left) division}
operation $x = b \rd a \; (x = a \ld b)$ which is also a
quasigroup (short for: $(Q;\rd)$ is a quasigroup). We can define
three more operations:
$$
x * y = y\cdot x  \hspace{2cm} x \rdd y = y \rd x  \hspace{2cm} x
\ldd y = y \ld x
$$
dual to $\cdot, \rd, \ld$\/ respectively. They are also
quasigroups.
%
%
%
The six operations $\cdot, \rd, \ld, *, \rdd$ and $\ldd$ are
\emph{parastrophes} of $\cdot$ (and of each other).

A function $f : Q \to R$  between the base sets of quasigroups
$(Q; \cdot)$ and $(R, \cdot)$ is a \emph{homomorphism} iff:
$$f(x) \cdot f(y) = f(x\cdot y)$$
and \emph{isomorphism} if $f$ is a bijection as well.

A triple $\bar{f} = (f_1, f_2, f_3)$ of functions ($f_i : Q \to
R$) is a {homotopy} iff:
$$f_1(x) \cdot f_2(y) = f_3(x\cdot y)$$
which implies (and is implied by any of):
\begin{alignat}{2}
f_3(x) \rd f_2(y) &= f_1(x \rd y)     & \qqqquad f_2(x) \rdd f_3(y) &= f_1(x \rdd y) \notag \\
f_1(x) \ld f_3(y) &= f_2(x \ld y) & \qqqquad f_3(x) \ldd f_1(y) &=
f_2(x \ldd y) \notag
\end{alignat}
If all three components of $\bar{f}$ are bijections, then
$\bar{f}$ is an {isotopy}.

\begin{center}
***
\end{center}

We can also
define a quasigroup as an algebra $(Q; \cdot, \rd, \ld)$ with three binary operations: multiplication ($\cdot$), right and left division.
The axioms that a quasigroup satisfies are ($xy$ is short for $(x \cdot y)$): \\
\parbox{8cm}{\[
\begin{alignedat}{2}
xy \rd y &= x     & \qqqquad x \ld xy &= y  \\
(x \rd y) y &= x  & \qqqquad x (x \ld y) &= y
\end{alignedat}\]}
\hfill
\parbox{3cm}
{
\begin{equation*}
\tag{Q}
\end{equation*}}

\noindent For obvious reasons, such quasigroups are called
\emph{equational, primitive} or \emph{equasigroups}.

Thus, we have the variety of all quasigroups. Another important
variety is the variety of semisymmetric quasigroups, defined by
one of the following five equivalent axioms (in addition to (Q)):
\eqn{x \cdot yx = y}{SS1} \eqn{xy \cdot x = y}{SS2}
\[
x \rd y =yx
\]
\[
x \ld y = yx
\]
\[
x \ld y = x \rd y
\]

Smith, \cite{Smith}, defined a \emph{semisymmetrization} of a
quasigroup $\mathbb{Q} = (Q; \cdot, \rd, \ld)$ as a one--operation
quasigroup $\mathbb{Q}^{\gD} = (Q^3; \circ)$ where the binary
operation $\circ$ is defined by: \eqn{(x_1,x_2,x_3) \circ
(y_1,y_2,y_3) = (y_3 \rd x_2, y_1 \ld x_3, x_1 y_2)}{SSprod} and
proved that, for any quasigroup $\mathbb{Q}$\/, the
semisymmetrization $\mathbb{Q}^{\gD}$ of $\mathbb{Q}$ is a
semisymmetric quasigroup.
%

\section{Twisted quasigroups}\label{twist}

For our purpose, there is a better way to define a quasigroup. In
this definition the \emph{twisted quasigroup} is an algebra $(Q;
\rdd, \ldd, \cdot)$ satisfying appropriate paraphrasing of the
above quasigroup axioms (Q):
\begin{alignat}{2}
\label{ax2}
y \rdd xy &= x     & \qqqquad xy \ldd x &= y \notag \\
(y \rdd x) y &= x  & \qqqquad x (y \ldd x) &= y \notag
\end{alignat}

We have the following symmetry result, lacking for quasigroups
defined as $(Q; \cdot, \rd, \ld)$\/.
\begin{prop}
\label{TQ} An algebra $(Q; \rdd, \ldd, \cdot)$ is a \emph{twisted
quasigroup} iff $(Q; \ldd, \cdot, \rdd)$ is a \emph{twisted
quasigroup} iff $(Q; \cdot, \rdd, \ldd)$ is a \emph{twisted
quasigroup}.
\end{prop}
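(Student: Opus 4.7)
My plan is to recognize a cyclic symmetry in the defining axioms of a twisted quasigroup $(Q; \star_1, \star_2, \star_3)$: namely, that if the four abstract identities
\[
y \star_1 (x \star_3 y) = x, \quad (x \star_3 y) \star_2 x = y, \quad (y \star_1 x) \star_3 y = x, \quad x \star_3 (y \star_2 x) = y
\]
hold for a given operation triple $(\star_1, \star_2, \star_3)$, then they also hold for the rotated triple $(\star_2, \star_3, \star_1)$. Since the three signatures in the statement are the successive cyclic shifts of one another, this single rotation implication, applied once and then twice, already yields the full three-way equivalence (a third application closes the cycle back to the original signature).

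To establish the rotation, I would write out the four identities that must be verified after the shift. Two of them are immediate relabelings under the swap $x \leftrightarrow y$ of the first and third original axioms. The remaining two rotated identities are derived by a short chain: introduce an abbreviation for the compound term appearing on the left, use a just-verified relabeled identity to rewrite it, then apply one more original axiom. For example, to obtain the rotated identity $y \star_2 (x \star_1 y) = x$, set $c = x \star_1 y$; the relabeled third original axiom gives $c \star_3 x = y$, and the second original axiom (with $x \mapsto c$) then yields $y \star_2 c = x$. The other nontrivial rotated identity is handled symmetrically, using the fourth and first original axioms in place of the third and second.

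I do not foresee any real obstacle — the whole content of the proposition is the observation that the axiom scheme possesses a hidden $\mathbb{Z}/3$-symmetry. The only thing that requires care is the bookkeeping of the variable relabelings and of matching each of the two nontrivial rotated identities to the correct pair of original axioms.
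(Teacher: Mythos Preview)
Your argument is correct. The paper states this proposition without proof, treating the cyclic symmetry of the twisted-quasigroup axioms as self-evident; your explicit verification---two of the rotated axioms being relabelings of original ones, and each remaining one obtained by a two-step substitution---is precisely the routine computation that makes that implicit symmetry explicit.
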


Analogously, we have the paraphrasing of axioms for \emph{twisted
semisymmetric quasigroups}: (\ref{eq:SS1}),(\ref{eq:SS2}) and
\[
x \rdd y = xy
\]
\[
x \ldd y = xy
\]
\[
x \ldd y = x \rdd y
\]
The last three identities we shorten to symbolic identities:
$\rdd = \cdot, \ldd = \cdot, \ldd = \rdd$\/.

There is also a result corresponding to Proposition~\ref{TQ}:
\begin{prop}
\label{SQ} An algebra $(Q; \rdd, \ldd, \cdot)$ is a
\emph{semisymmetric twisted quasigroup} iff $(Q; \ldd, \cdot,
\rdd)$ is a \emph{semisymmetric twisted quasigroup} iff $(Q;
\cdot, \rdd, \ldd)$ is a \emph{semisymmetric twisted quasigroup}.
\end{prop}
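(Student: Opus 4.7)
The plan is to deduce this from Proposition~\ref{TQ} together with the observation that semisymmetry in a twisted quasigroup forces the three operations $\rdd$, $\ldd$ and $\cdot$ to coincide. Proposition~\ref{TQ} already delivers the twisted quasigroup axioms under the cyclic permutations $(\rdd,\ldd,\cdot)\mapsto(\ldd,\cdot,\rdd)\mapsto(\cdot,\rdd,\ldd)$, so the task reduces to showing that semisymmetry transfers.

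The core step is to establish that every semisymmetric twisted quasigroup satisfies $\rdd = \ldd = \cdot$ on the nose. I would argue as follows. The twisted quasigroup axiom $x(y \ldd x) = y$ exhibits $y \ldd x$ as the unique $z$ with $xz = y$; but (SS1), read as $x \cdot yx = y$, exhibits $z = yx$ as such a solution, so $y \ldd x = yx$, i.e.\ $\ldd = \cdot$. Symmetrically, the axiom $(y \rdd x)y = x$ combined with (SS2) in the form $(yx)y = x$ yields $y \rdd x = yx$, i.e.\ $\rdd = \cdot$. With all three operations equal, the triples $(Q;\rdd,\ldd,\cdot)$, $(Q;\ldd,\cdot,\rdd)$ and $(Q;\cdot,\rdd,\ldd)$ are literally the same algebra, and the two iffs become tautologies.

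The only point requiring care is the equivalence of the five semisymmetric axioms in the twisted setting, implicitly used whenever a semisymmetric twisted quasigroup is assumed to satisfy some axiom other than (SS1) or (SS2). This equivalence must be re-verified because the roles of $\rdd$ and $\ldd$ are permuted relative to the non-twisted case. It is, however, straightforward: the forward derivations above produce two of the three symbolic identities from (SS1)/(SS2), while the reverse direction---deriving (SS1), say, from $\ldd = \cdot$---follows by substituting the symbolic identity into the twisted quasigroup axiom $x(y \ldd x) = y$. This bookkeeping is the main, though routine, obstacle.
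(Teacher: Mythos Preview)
The paper states this proposition without proof, presenting it simply as the analogue of Proposition~\ref{TQ} for the semisymmetric case. Your argument is correct: deriving $\ldd=\cdot$ from the twisted axiom $x(y\ldd x)=y$ together with (\ref{eq:SS1}), and $\rdd=\cdot$ from $(y\rdd x)y=x$ together with (\ref{eq:SS2}), uses exactly the uniqueness of one-sided solutions that the twisted quasigroup axioms provide (e.g.\ $xz=y$ implies $z=(xz)\ldd x=y\ldd x$). Once all three operations coincide, the three cyclically permuted algebras are literally the same structure, and the biconditionals hold trivially.

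Your remark about re-checking the equivalence of the five semisymmetry axioms in the twisted signature is apt and the verification you sketch is sound. One could alternatively argue, in the spirit the paper presumably intends, that the set of five equivalent semisymmetry conditions is itself closed under the cyclic permutation of operation slots (the three symbolic identities $\rdd=\cdot$, $\ldd=\cdot$, $\ldd=\rdd$ are permuted among themselves, and (\ref{eq:SS1}), (\ref{eq:SS2}) transform into identities for $\rdd$ which reduce back to (\ref{eq:SS1}), (\ref{eq:SS2}) once $\rdd=\cdot$ is available), so Proposition~\ref{TQ} plus this closure yields the result directly. Your route via the collapse of operations is slightly more concrete and arguably cleaner, since it makes the triviality of the conclusion manifest.
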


\begin{center}
***
\end{center}

Using twisted quasigroups we can see how a (twisted)
semisymmetrization (defined below), which we call $\nabla$\/,
'works'.

Let us start with three single--operation quasigroups $(Q; \cdot),
(Q; \rdd)$ and $(Q; \ldd)$\/, where $\rdd$ and $\ldd$ are duals of
appropriate division operations of $\cdot$\/. We can define direct
(Cartesian) product
 $(Q; \rdd) \times(Q; \ldd) \times (Q; \cdot)$ and an operation $\otimes$ on $Q^3$ such that
\eqn{(x_1,x_2,x_3) \otimes (y_1,y_2,y_3) = (x_1 \rdd y_1, x_2 \ldd
y_2, x_3 y_3)}{Xprod} defines multiplication in the direct
product. Therefore $(Q^3; \otimes)$ is a quasigroup.

Define also a permutation $' : Q^3 \to Q^3$ by $(x_1,x_2,x_3)' =
(x_2,x_3,x_1)$\/. It follows that $(x_1,x_2,x_3)'' =
(x_3,x_1,x_2)$ and $(x_1,x_2,x_3)''' = (x_1,x_2,x_3)$\/. Define
another operation $\nabla_3 : Q^3 \times Q^3\to Q^3$ by $\bar{x}
\nabla_3 \bar{y} = \bar{x}' \otimes \bar{y}''$, where
$\bar{u}=(u_1,u_2,u_3)$\/. The groupoid $(Q^3; \nabla_3)$ is also
a quasigroup, so there are appropriate division operations of
$\nabla_3$ and their duals $\nabla_1$ and $\nabla_2$:
$$\bar{x} \nabla_3 \bar{y} = \bar{z} \mbox{\quad  iff \quad}
\bar{y} \nabla_1 \bar{z} = \bar{x} \mbox{\quad iff \quad} \bar{z}
\nabla_2 \bar{x} = \bar{y} .$$ Therefore $(Q^3; \nabla_1,
\nabla_2, \nabla_3)$ is a twisted quasigroup.

Let us calculate $\nabla_1$\/.
\begin{tabbing}
\hspace{1.5em} $\bar{z}$ \= $ = (z_1,z_2,z_3) = \bar{x} \nabla_3
\bar{y} = (x_1,x_2,x_3)' \otimes (y_1,y_2,y_3)''$
\\[1ex]
\> $ = (x_2,x_3,x_1) \otimes (y_3,y_1,y_2) = (x_2 \rdd y_3, x_3
\ldd y_1, x_1 y_2)$\/.
\end{tabbing}
Therefore
\[
\bar{x} = (y_2 \rdd z_3, y_3 \ldd z_1, y_1 z_2) = (y_2,y_3,y_1)
\otimes (z_3,z_1,z_2) = \bar{y}' \otimes \bar{z}'' = \bar{y}
\nabla_3 \bar{z}
\]
i.e. $\nabla_1 = \nabla_3$ (and consequently $\nabla_2 =
\nabla_3$) hence $(Q^3; \nabla_1, \nabla_2, \nabla_3)$ is
semisymmetric twisted quasigroup. So we recognize $\nabla_3$ as a
twisted analogue of Smith's $\circ$ (see identity (\ref{eq:SSprod})). Let
us call $\mathbb{Q}^{\nabla} = (Q^3; \nabla_1, \nabla_2,
\nabla_3)$ a \emph{twisted semisymmetrization} of $\mathbb{Q}$\/.

For $(f_1,f_2,f_3)$ being a homotopy from $\mathbb{Q}$ to
$\mathbb{R}$, we also have:
\begin{tabbing}
\hspace{1.5em}$(f_1\times f_2\times f_3)\,(\bar{x} \nabla_3
\bar{y})$ \= $=(f_1\times f_2\times f_3)\,(\bar{x}' \otimes
\bar{y}'') $
\\[1ex]
\> $= (f_1(x_2 \rdd y_3), f_2(x_3 \ldd y_1), f_3(x_1 \cdot y_2))$
\\[1ex]
\> $= (f_2 x_2 \rdd f_3 y_3, f_3 x_3 \ldd f_1 y_1, f_1 x_1 \cdot
f_2 y_2)$
\\[1ex]
\> $= (f_2 x_2, f_3 x_3, f_1 x_1) \otimes (f_3 y_3, f_1 y_1, f_2
y_2)$
\\[1ex]
\> $  = (f_1 x_1, f_2 x_2, f_3 x_3)' \otimes (f_1 x_1, f_2 x_2,
f_3 x_3)''$
\\[1ex]
\> $ = (f_1\times f_2\times f_3)\,(\bar{x}) \nabla_3 (f_1\times
f_2\times f_3)\,(\bar{y})$\/,
\end{tabbing}
so $f_1\times f_2\times f_3$ is a homomorphism.

\section{Biquasigroups}\label{biq}

\begin{defn}
\label{bi} An algebra $(Q; \rdd, \ldd)$ is a \emph{biquasigroup} iff
$\rdd (\ldd)$ is the dual of the right (left) division operation
of a quasigroup operation $\cdot$\/.

A biquasigroup is semisymmetric iff $\ldd = \rdd$\/.
\end{defn}
\begin{prop}
An algebra $(Q; \rdd, \ldd)$ is a biquasigroup iff $(Q; \ldd,
\cdot)$ is a biquasigroup iff $(Q; \cdot, \ldd)$ is a
biquasigroup.
\end{prop}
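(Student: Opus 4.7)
The plan is to derive this from Proposition~\ref{TQ} by identifying a biquasigroup with a twisted quasigroup whose ``multiplication'' component has been forgotten, yet is uniquely recoverable. First I would establish the key reformulation: an algebra $(Q; *_1, *_2)$ is a biquasigroup if and only if there exists an operation $*_3$ on $Q$ such that $(Q; *_1, *_2, *_3)$ satisfies the four twisted quasigroup axioms displayed in Section~\ref{twist}. Indeed, unpacking Definition~\ref{bi}, saying that $*_1$ is the dual of the right division of $*_3$ and $*_2$ is the dual of the left division of $*_3$ is exactly equivalent to those four identities. Moreover the operation $*_3$ is then uniquely determined by $*_1$ (and independently by $*_2$): for instance $x *_3 y = z$ iff $y *_1 z = x$, which makes sense because $*_1$, being a parastrophe of a quasigroup, is itself a quasigroup operation.

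With this reformulation in hand, the forward chain of implications is almost immediate. If $(Q;\rdd,\ldd)$ is a biquasigroup then by the observation there is $\cdot$ making $(Q;\rdd,\ldd,\cdot)$ a twisted quasigroup; applying Proposition~\ref{TQ}, $(Q;\ldd,\cdot,\rdd)$ is also a twisted quasigroup, so the observation applied in reverse gives that $(Q;\ldd,\cdot)$ is a biquasigroup, now with $\rdd$ in the role of underlying multiplication. A second application of Proposition~\ref{TQ} produces the twisted quasigroup $(Q;\cdot,\rdd,\ldd)$, whence $(Q;\cdot,\rdd)$ is a biquasigroup with $\ldd$ as its underlying multiplication. (I read the third pair in the statement as $(Q;\cdot,\rdd)$, since the cyclic pattern is the one used in Proposition~\ref{TQ}.) The converses follow by running the same template backwards: start from a biquasigroup on any one of the three pairs, extend to a triple via the observation, cycle via Proposition~\ref{TQ}, and strip a component.

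The main obstacle is the reformulation step — in particular verifying the uniqueness of $*_3$, because it is precisely this uniqueness which guarantees that the operations reconstructed at each stage of the cycle really agree with the symbols $\rdd$, $\ldd$, $\cdot$ as named in the statement, rather than being new parasitic operations. The existence half of the reformulation is a direct rewriting of the quasigroup axioms (Q) into their twisted form, while the uniqueness half reduces to the fact that each parastrophe of a quasigroup is itself a quasigroup, so that the defining equivalence $x *_3 y = z \Leftrightarrow y *_1 z = x$ actually determines a function $*_3$. Once these two points are settled, Proposition~\ref{TQ} does all the remaining work.
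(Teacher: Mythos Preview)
The paper states this proposition without proof, treating it as an immediate consequence of Definition~\ref{bi} together with Proposition~\ref{TQ}; your reformulation of a biquasigroup as a twisted quasigroup with its third operation forgotten (and uniquely recoverable) followed by the cyclic application of Proposition~\ref{TQ} is precisely the intended derivation. Your reading of the third pair as $(Q;\cdot,\rdd)$ rather than $(Q;\cdot,\ldd)$ is also correct: the printed $(Q;\cdot,\ldd)$ is a typo, as the cyclic pattern of Proposition~\ref{TQ} and the subsequent use of $(Q^2;\cdot,\rdd)$ later in Section~\ref{biq} both confirm.
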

\begin{prop}
An algebra $(Q; \rdd, \ldd)$ is a semisymmetric  biquasigroup iff $(Q; \ldd,
\cdot)$ is a semisymmetric biquasigroup iff $(Q; \cdot, \ldd)$ is a
semisymmetric biquasigroup.
\end{prop}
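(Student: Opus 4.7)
The plan is to reduce the statement to the preceding (unlabelled) proposition on biquasigroups, which already delivers the equivalence of the three algebras $(Q;\rdd,\ldd)$, $(Q;\ldd,\cdot)$, and $(Q;\cdot,\ldd)$ being biquasigroups at all, and then to track the single extra equation that encodes semisymmetry. By Definition~\ref{bi}, the semisymmetry conditions in the three presentations are, respectively, $\rdd=\ldd$, $\ldd=\cdot$, and $\cdot=\ldd$, so only the equivalence of the two identities $\rdd=\ldd$ and $\ldd=\cdot$ needs to be established in the presence of the underlying quasigroup structure.

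For this I would appeal to the list of equivalent axioms for semisymmetric twisted quasigroups collected at the start of Section~\ref{twist}: the three symbolic identities $\rdd=\cdot$, $\ldd=\cdot$, and $\ldd=\rdd$ each cut out, together with the (twisted) quasigroup axioms, the same variety, namely semisymmetric twisted quasigroups. Consequently, in any biquasigroup $(Q;\rdd,\ldd)$ with its associated operation $\cdot$, any one of these three identities forces the other two, and so collapses the three operations $\cdot,\rdd,\ldd$ to a single common binary operation.

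Putting the two ingredients together gives the proposition. Starting from a semisymmetric biquasigroup in any one of the three presentations, the previous proposition delivers the biquasigroup structure in the other two presentations, while the collapse $\cdot=\rdd=\ldd$ supplies the equality of the two distinguished operations in each presentation, i.e.\ the semisymmetry condition. I do not anticipate a genuine obstacle here; the only subtlety is bookkeeping, in that the ``underlying'' quasigroup operation implicit in Definition~\ref{bi} for the rotated presentations $(Q;\ldd,\cdot)$ and $(Q;\cdot,\ldd)$ need not a priori be the original $\cdot$ (it will be one of the cyclically shifted operations supplied by the previous proposition and by Proposition~\ref{SQ}), but once semisymmetry is in force this ambiguity evaporates, because all three operations agree.
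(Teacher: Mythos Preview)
The paper states this proposition without proof; it is presented as the biquasigroup analogue of Proposition~\ref{SQ} and is left to the reader as self-evident from the preceding material. Your proposal is correct and is exactly the kind of argument the authors are tacitly relying on: reduce the underlying biquasigroup equivalence to the preceding proposition, and then use the fact (recorded in Section~\ref{twist}) that the three symbolic identities $\rdd=\cdot$, $\ldd=\cdot$, $\ldd=\rdd$ are equivalent over the twisted quasigroup axioms, so that semisymmetry in one presentation forces the collapse $\cdot=\rdd=\ldd$ and hence semisymmetry in the other two. Your remark about the bookkeeping of the implicit third operation is apt and correctly resolved.
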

\begin{center}
***
\end{center}

Let us start with three single--operation quasigroups $(Q; \cdot),
(Q; \rdd)$ and $(Q; \ldd)$\/, where $\rdd$ and $\ldd$ are duals of
appropriate division operations of $\cdot$\/. We can define direct
(Cartesian) product
 $(Q; \rdd) \times(Q; \ldd)$ and an operation $\otimes$ on $Q^2$ such that
\[
(x_1,x_2) \otimes (y_1,y_2) = (x_1 \rdd y_1, x_2 \ldd
y_2)
\]
defines multiplication in the direct product. Therefore $(Q^2;
\otimes)$ is a quasigroup.

Define also a permutation $' : Q^2 \to Q^2$ by $(x_1,x_2)' =
(x_2,x_1)$\/.
Define another operation $\nabla : Q^2\times Q^2 \to Q^2$ by
$\hat{x} \nabla \hat{y} = R_{\hat{y}}(\hat{x}') \otimes
L_{\hat{x}}(\hat{y}')$\/, where $\hat{u}$ is $(u_1,u_2)$,
$L_{\hat{x}}(\hat{y}) = (x_1\cdot y_1, y_2)$ and
$R_{\hat{y}}(\hat{x}) = (x_1, x_2\cdot y_2)$\/. The groupoid
$(Q^2; \nabla)$ is also a quasigroup, moreover a semisymmetric
one.
Therefore $(Q^2; \nabla, \nabla)$ is a semisymmetric
biquasigroup.

Let us define:
$$x \op{1} y = x \rdd y \qquad\qquad
x \op{2} y = x \ldd y \qquad\qquad x \op{3} y = x \cdot y$$ Then
the definition of $\nabla_{12}$, which we abbreviate just by
$\nabla$, is:
$$(x_1,x_2) \nabla_{12} (y_1,y_2) = (x_2 \op{1} (x_1 \op{3} y_2), (x_1 \op{3} y_2) \op{2} y_1).$$
There are two more alternative semisymmetrizations with corresponding definitions in $(Q^2; \ldd, \cdot)$
(respectively $(Q^2; \cdot, \rdd)$):
$$(x_1,x_2) {\nabla}_{23} (y_1,y_2) = (x_2 \op{2} (x_1 \op{1} y_2), (x_1 \op{1} y_2) \op{3} y_1)$$
$$(x_1,x_2) {\nabla}_{31} (y_1,y_2) = (x_2 \op{3} (x_1 \op{2} y_2), (x_1 \op{2} y_2) \op{1} y_2).$$
The indexing of operations is used to emphasize the symmetry.
These semisymmetrizations are object functions of functors related
to a functor explained in details in Section~\ref{new}.

\section{The categories \textbf{Qtp} and \textbf{P}}\label{adj}

This section follows
the lines of \cite{Smith} with some adjustments. The main novelty
is a proof of \cite[Corollary~5.3]{Smith}. We try to keep to the
notation introduced in \cite{Smith}. However, we write functions
and functors to the left of their arguments.

Let \textbf{Qtp} be the category with objects all small quasigroups
$\mathbb{Q}=(Q; \cdot, \rd, \ld)$ and arrows all homotopies. The
identity homotopy on $\mathbb{Q}$ is the triple
$(\mj_Q,\mj_Q,\mj_Q)$, where $\mj_Q$ is the identity function on
$Q$, and the composition of homotopies
\[
(f_1,f_2,f_3)\!:\mathbb{P}\rightarrow \mathbb{Q}\quad\mbox{\rm
and}\quad (g_1,g_2,g_3)\!:\mathbb{Q}\rightarrow \mathbb{R}
\]
is the homotopy
\[
(g_1\circ f_1,g_2\circ f_2,g_3\circ f_3)\!:\mathbb{P}\rightarrow
\mathbb{R}.
\]

Let \textbf{P} be the category with objects all small
semisymmetric quasigroups and arrows all quasigroup homomorphisms.
For every arrow $f\!:\mathbb{Q}\str \mathbb{R}$ of \textbf{P}, the
triple $(f,f,f)$ is a homotopy between $\mathbb{Q}$ and
$\mathbb{R}$.

Let $\Sigma$ be a functor from \textbf{P} to \textbf{Qtp}, which
is identity on objects. Moreover, let $\Sigma f$, for a
homomorphism $f$, be the homotopy $(f,f,f)$.

The category \textbf{P} is a full subcategory of the category
\textbf{Q} with objects all small quasigroups and arrows all
quasigroup homomorphisms. The functor $\Sigma$ is just a
restriction of a functor from \textbf{Q} to \textbf{Qtp}, which is
defined in the same manner.

An \emph{adjunction} is given by two functors, $F\!:\textbf{C}\str
\textbf{D}$ and $G\!:\textbf{D}\str \textbf{C}$, and two natural
transformations, the \emph{unit} $\eta\!:\mj_\textbf{C}\strt GF$
and the \emph{counit} $\varepsilon\!:FG\strt \mj_\textbf{D}$, such
that for every object $C$ of $\textbf{C}$ and every object $D$ of
$\textbf{D}$
\[
G\varepsilon_D \circ \eta_{GD}=\mj_{GD}, \quad \mbox{\rm and}\quad
\varepsilon_{FC}\circ F\eta_C=\mj_{FC}.
\]
These two equalities are called \emph{triangular identities}. The
functor $F$ is a \emph{left adjoint} for the functor $G$, while
$G$ is a \emph{right adjoint} for the functor $F$.

That $\Sigma\!:\textbf{P}\str \textbf{Qtp}$ has a right adjoint is
shown as follows. Let $\rdd$ and $\ldd$ be defined as at the
beginning of Section~\ref{Q}. For $\mathbb{Q}$ a quasigroup, let
$\nabla_3\!:Q^3\times Q^3\to Q^3$ be defined as in
Section~\ref{twist}, i.e.\ for every $\bar{x}=(x_1,x_2,x_3)$ and
$\bar{y}=(y_1,y_2,y_3)$
\[
\bar{x} \nabla_3 \bar{y}=(x_2 \rdd y_3, x_3 \ldd y_1, x_1\cdot
y_2).
\]
That $(Q^3;\nabla_3)$ is a semisymmetric quasigroup follows from
the fact that the structure $(Q^3;\nabla_1,\nabla_2,\nabla_3)$ is
a semisymmetric twisted quasigroup, which is shown in
Section~\ref{twist}. The semisymmetric quasigroup $(Q^3;\nabla_3)$
is the semisymmetrization $\mathbb{Q}^{\gD}$ of $\mathbb{Q}$
defined at the end of Section~\ref{Q} (see (\ref{eq:SSprod})).

Let $\Delta\!:\textbf{Qtp}\str \textbf{P}$ be a functor, which
maps a quasigroup $\mathbb{Q}$ to the semisymmetric quasigroup
$(Q^3;\nabla_3)$. A homotopy $(f_1,f_2,f_3)$ is mapped by $\Delta$
to the product $f_1\times f_2\times f_3$, which is a homomorphism
as it is shown at the end of Section~\ref{twist}. By the
functoriality of product, we have that $\Delta$ preserves
identities and composition, and it is indeed a functor.

\begin{prop}\label{adjunction}
The functor $\Delta$ is a right adjoint for $\Sigma$.
\end{prop}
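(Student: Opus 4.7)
The plan is to exhibit explicit unit and counit natural transformations and verify naturality together with the two triangular identities. Since $\Sigma$ is the identity on objects, $\Delta\Sigma\mathbb{P}=(P^3;\nabla_3)$ for any semisymmetric quasigroup $\mathbb{P}$, and $\Sigma\Delta\mathbb{Q}$ is the underlying quasigroup $(Q^3;\nabla_3)$ regarded in \textbf{Qtp} for any quasigroup $\mathbb{Q}$. I would take $\eta_{\mathbb{P}}\!:\mathbb{P}\str\Delta\Sigma\mathbb{P}$ to be the diagonal map $x\mapsto(x,x,x)$, and $\varepsilon_{\mathbb{Q}}\!:\Sigma\Delta\mathbb{Q}\str\mathbb{Q}$ to be the triple $(\mathrm{pr}_1,\mathrm{pr}_2,\mathrm{pr}_3)$ of coordinate projections $Q^3\to Q$.

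The first step is to verify that $\eta_{\mathbb{P}}$ really lands in \textbf{P}, that is, is a quasigroup homomorphism. Using the formula for $\nabla_3$ one computes $(x,x,x)\nabla_3(y,y,y)=(x\rdd y,x\ldd y,x\cdot y)$, and the semisymmetry identities $\rdd=\cdot$ and $\ldd=\cdot$ recorded in Section~\ref{Q} collapse this triple to $(xy,xy,xy)=\eta_{\mathbb{P}}(xy)$. Similarly, that $\varepsilon_{\mathbb{Q}}$ is a homotopy amounts to $\mathrm{pr}_1(\bar{x})\cdot\mathrm{pr}_2(\bar{y})=\mathrm{pr}_3(\bar{x}\nabla_3\bar{y})$, and the right-hand side is $x_1\cdot y_2$ by the very definition of $\nabla_3$.

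Naturality then comes essentially for free. For a homomorphism $f\!:\mathbb{P}\str\mathbb{R}$ of \textbf{P}, $(\Delta\Sigma f)\circ\eta_{\mathbb{P}}$ sends $x$ to $(f(x),f(x),f(x))=\eta_{\mathbb{R}}(f(x))$. For a homotopy $(f_1,f_2,f_3)$ of \textbf{Qtp}, the $i$-th component of $(f_1,f_2,f_3)\circ\varepsilon_{\mathbb{Q}}$ is $f_i\circ\mathrm{pr}_i$, which equals $\mathrm{pr}_i\circ(f_1\times f_2\times f_3)$, the $i$-th component of $\varepsilon_{\mathbb{R}}\circ\Sigma\Delta(f_1,f_2,f_3)$.

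Finally, the two triangular identities are one-line checks from the explicit definitions. The composite $\Delta\varepsilon_{\mathbb{Q}}\circ\eta_{\Delta\mathbb{Q}}$ sends $\bar{x}\in Q^3$ to $(\mathrm{pr}_1\bar{x},\mathrm{pr}_2\bar{x},\mathrm{pr}_3\bar{x})=\bar{x}$, and the $i$-th component of $\varepsilon_{\Sigma\mathbb{P}}\circ\Sigma\eta_{\mathbb{P}}$ is $\mathrm{pr}_i\circ\eta_{\mathbb{P}}=\mj_P$. The only genuinely non-routine point in the whole argument is the homomorphism check for $\eta$, which is precisely where the assumption that objects of \textbf{P} are semisymmetric is used: on an arbitrary quasigroup the diagonal would not in general be a homomorphism into $(Q^3;\nabla_3)$.
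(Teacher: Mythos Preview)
Your proof is correct and follows essentially the same route as the paper: the unit is the diagonal $x\mapsto(x,x,x)$, the counit is the triple of projections, and the verifications of homomorphism, homotopy, naturality, and the two triangular identities are carried out in the same order and by the same computations. Even your closing remark, that semisymmetry of $\mathbb{P}$ is exactly what makes the diagonal a homomorphism into $(P^3;\nabla_3)$, matches the paper's parenthetical observation at that point.
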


\begin{proof}
For $\mathbb{P}$ an object of \textbf{P}, let
$\eta_\mathbb{P}\!:P\str P^3$ be a function defined so that for
every $x\in P$, $\eta_\mathbb{P}(x)=(x,x,x)$. Note that
$\eta_\mathbb{P}$ is a homomorphism from $\mathbb{P}$ to
$\Delta\Sigma\mathbb{P}$, i.e.\ an arrow of \textbf{P}, since
\[
\eta_\mathbb{P}(x\cdot y)=(x\cdot y, x\cdot y, x\cdot y)=(x\rdd
y,x\ldd y,x\cdot y)=\eta_\mathbb{P}(x)\nabla_3\eta_\mathbb{P}(y).
\]
(This is why we consider just a restriction $\Sigma$ of a functor
from \textbf{Q} to \textbf{Qtp}. For an object $\mathbb{P}$ of
\textbf{Q}, the function defined as $\eta_\mathbb{P}$ need not be
a homomorphism.)

Let $\eta$ be the family
\[
\{\eta_\mathbb{P}\mid \mathbb{P}\; \mbox{\rm is an object of }
\textbf{P}\}.
\]
This family is a natural transformation from the identity functor
on \textbf{P} to the composition $\Delta\Sigma$ since for every
arrow $f\!:\mathbb{P}\str \mathbb{Q}$ of \textbf{P} and every
$x\in P$, we have
\[
(\eta_\mathbb{Q}\circ f)(x)=(f(x),f(x),f(x))=(\Delta\Sigma f\circ
\eta_\mathbb{P}) (x).
\]

For $\mathbb{Q}$ an object of \textbf{Qtp} and $i\in\{1,2,3\}$,
let $\pi_i\!:Q^3\str Q$ be the $i$th projection. Let
$\varepsilon_\mathbb{Q}$ be the triple $(\pi_1,\pi_2,\pi_3)$,
which is a homotopy from $\Sigma\Delta \mathbb{Q}$ to
$\mathbb{Q}$, since
\[
\pi_1(\bar{x})\cdot \pi_2(\bar{y})=x_1\cdot
y_2=\pi_3(\bar{x}\:\nabla_3\: \bar{y}).
\]
Hence, $\varepsilon_\mathbb{Q}$ is an arrow of \textbf{Qtp}.

Let $\varepsilon$ be the family
\[
\{\varepsilon_\mathbb{Q}\mid \mathbb{Q}\; \mbox{\rm is an object
of } \textbf{Qtp}\}.
\]
This family is a natural transformation from the composition
$\Sigma\Delta$ to the identity functor on \textbf{Qtp} since for
every arrow $(f_1,f_2,f_3)\!:\mathbb{Q}\str \mathbb{R}$ of
\textbf{Qtp} and every $\bar{x}$ from $\Sigma\Delta \mathbb{Q}$,
we have
\[
(\pi_i\circ (f_1\times f_2\times
f_3))(\bar{x})=f_i(x_i)=(f_i\circ\pi_i)(\bar{x}).
\]

Eventually, we have to show that the following triangular
identities hold for every object $\mathbb{Q}$ of \textbf{Qtp} and
every object $\mathbb{P}$ of \textbf{P}
\[
\Delta(\varepsilon_\mathbb{Q})\circ \eta_{\Delta
\mathbb{Q}}=\mj_{\Delta \mathbb{Q}} \quad\mbox{\rm and}\quad
\varepsilon_{\Sigma \mathbb{P}}\circ
\Sigma(\eta_\mathbb{P})=\mj_{\Sigma \mathbb{P}}.
\]
\begin{tabbing}
For every $\bar{x}\in Q^3$, we have
\\[1.5ex]
\hspace{1.5em}\= $(\Delta(\varepsilon_\mathbb{Q})\circ
\eta_{\Delta \mathbb{Q}})(\bar{x})$ \=
$=\Delta(\varepsilon_\mathbb{Q})(\bar{x},\bar{x},\bar{x})=(\pi_1\times\pi_2\times\pi_3)
(\bar{x},\bar{x},\bar{x})=(x_1,x_2,x_3)$
\\[1ex]
\>\> $=\bar{x}$.
\\[2ex]
For every $x\in P$ and every $i\in\{1,2,3\}$, we have
\\[1.5ex]
\> $(\pi_i\circ \eta_\mathbb{P})(x)=\pi_i(x,x,x)=x$.
\end{tabbing}

\vspace{-4.5ex}
\end{proof}

Moreover, every component of the counit of this adjunction is epi
(i.e.\ right cancellable) and the semisymmetrization is one-one.
This is sufficent for \textbf{Qtp} to be isomorphic to a
subcategory of \textbf{P}. This is one way how to establish this
fact using the previous proposition. It is not clear to us how
this fact is obtained as a corollary of the corresponding
proposition in \cite{Smith}. However, if the goal was just to
establish that \textbf{Qtp} is isomorphic to a subcategory of
\textbf{P}, this adjunction is not necessary at all, which will be
also shown.

A functor $F\!:\textbf{C}\str \textbf{D}$ is \emph{faithful} when
for every pair $f,g\!:A\vdash B$ of arrows of \textbf{C}, $Ff=Fg$
implies $f=g$. An arrow $f\!:A\str B$ of \textbf{C} is \emph{epi}
when for every pair $g,h\!:B\str C$ of arrows of \textbf{C}, the
equality $g\circ f=h\circ f$ implies $g=h$. The following lemmata
will help us to prove that \textbf{Qtp} is isomorphic to a
subcategory of \textbf{P}.

\begin{lem}\label{Dfaith}
The functor $\Delta$ is faithful.
\end{lem}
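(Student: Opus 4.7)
The plan is to reduce the claim to a routine fact about Cartesian products of functions. By definition, $\Delta$ sends a homotopy $(f_1,f_2,f_3)\!:\mathbb{Q}\str\mathbb{R}$ to the product function $f_1\times f_2\times f_3\!:Q^3\str R^3$, so I need only verify that the assignment
\[
(f_1,f_2,f_3)\mapsto f_1\times f_2\times f_3
\]
is injective on parallel arrows. Assume $(f_1,f_2,f_3)$ and $(g_1,g_2,g_3)$ are both homotopies from $\mathbb{Q}$ to $\mathbb{R}$ with $f_1\times f_2\times f_3=g_1\times g_2\times g_3$. Evaluating both sides at an arbitrary $\bar{x}=(x_1,x_2,x_3)\in Q^3$, and then comparing the $i$th components of the resulting triples for each $i\in\{1,2,3\}$, gives $f_i(x_i)=g_i(x_i)$. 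Since $x_i$ ranges over all of $Q$, this forces $f_i=g_i$ for $i=1,2,3$, and hence $(f_1,f_2,f_3)=(g_1,g_2,g_3)$ as arrows of \textbf{Qtp}.

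There is essentially no obstacle here: the argument uses neither the semisymmetric quasigroup structure on $\Delta\mathbb{Q}$ nor the homotopy equations satisfied by the $f_i$ and $g_i$. The faithfulness of $\Delta$ is inherited from the injectivity of the ternary product operation on functions in \textbf{Set}, a consequence of the universal property of $Q^3$ as a product. The only degenerate case worth a parenthetical remark is $Q=\emptyset$, in which case all functions out of $Q$ coincide and the conclusion is vacuous.
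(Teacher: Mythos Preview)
Your argument is correct and coincides with the paper's second, direct proof of this lemma: from $f_1\times f_2\times f_3=g_1\times g_2\times g_3$ one reads off $f_i=g_i$ componentwise. The paper also records a first proof, which instead invokes the adjunction of Proposition~\ref{adjunction} and \cite[IV.3, Theorem~1]{ML71}, showing that each counit component $\varepsilon_\mathbb{Q}=(\pi_1,\pi_2,\pi_3)$ is epi; your route has the advantage, noted in the paper as well, of not depending on the adjunction at all.
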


\begin{proof}
By \cite[IV.3, Theorem~1, Part~(i)]{ML71} (see also
\cite[Section~4, Proposition~4.1]{D14} for an elegant proof of a
related result) it suffices to prove that for every object
$\mathbb{Q}$ of \textbf{Qtp}, the arrow $\varepsilon_\mathbb{Q}$
is epi.

Let $g,h\!:\mathbb{Q}\str \mathbb{R}$ be a pair of arrows of
\textbf{Qtp} such that $ g\circ \varepsilon_\mathbb{Q}=h\circ
\varepsilon_\mathbb{Q}$. This means that for every $i\in\{1,2,3\}$
we have that $g_i\circ\pi_i=h_i\circ \pi_i$. Hence, the function
$g_i$ is equal to the function $h_i$, since the function $\pi_i$
is right cancellable. (However, the homotopy
$\varepsilon_\mathbb{Q}$ need not have a right inverse in
\textbf{Qtp}.)

\vspace{1ex}

The second proof of this lemma is direct and does not rely on
Proposition~\ref{adjunction}. Simply, for homotopies
$(f_1,f_2,f_3)$ and $(g_1,g_2,g_3)$ from $\mathbb{Q}$ to
$\mathbb{R}$, if $f_1\times f_2\times f_3$ and $g_1\times
g_2\times g_3$ are equal as homomorphisms from $\Delta \mathbb{Q}$
to $\Delta \mathbb{R}$ in \textbf{P}, then for every
$i\in\{1,2,3\}$, $f_i=g_i$. Hence, these homotopies are equal
in~\textbf{Qtp}.
\end{proof}

\begin{lem}\label{dif}
If $(Q; \cdot, \rd, \ld)$ and $(Q; \cdot', \rd', \ld')$ are two
different quasigroups, then there are $x,y\in Q$ such that
\[
x\cdot y\neq x\cdot' y.\]
\end{lem}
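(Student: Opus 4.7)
I would prove this lemma by contrapositive: assuming $x \cdot y = x \cdot' y$ for all $x,y \in Q$, I would show that the two quasigroup structures coincide entirely, i.e.\ that also $\rd = \rd'$ and $\ld = \ld'$. The key observation is that within the equational presentation (Q) given in Section~\ref{Q}, the right and left divisions are uniquely determined by the multiplication.

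The main step is to note that in any quasigroup $(Q; \cdot, \rd, \ld)$, the value $a \rd b$ is characterized as the unique $z \in Q$ satisfying $z \cdot b = a$. Existence is immediate from the axiom $(x \rd y)\, y = x$ applied with $x = a,\ y = b$. For uniqueness, if $z \cdot b = a$, then applying the axiom $xy \rd y = x$ with $x = z$ and $y = b$ gives $z = (z \cdot b) \rd b = a \rd b$. Now if the multiplications $\cdot$ and $\cdot'$ coincide, then for each pair $(a,b)$, the equation $z \cdot b = a$ is literally the same equation as $z \cdot' b = a$, so its unique solutions, namely $a \rd b$ and $a \rd' b$, must be equal. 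Thus $\rd = \rd'$, and an entirely analogous argument using the axioms $x \ld xy = y$ and $x(x \ld y) = y$ yields $\ld = \ld'$.

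Having shown that coincidence of $\cdot$ and $\cdot'$ forces coincidence of all three operations, the two algebras $(Q; \cdot, \rd, \ld)$ and $(Q; \cdot', \rd', \ld')$ are the same quasigroup, contradicting the hypothesis that they are different. Hence there must exist $x, y \in Q$ with $x \cdot y \neq x \cdot' y$. There is no real obstacle here; the only delicate point is to be explicit that uniqueness of solutions (which is what makes divisions functional operations rather than multi-valued ones) is already encoded in the equational axioms (Q), so no appeal to the first, existential definition of quasigroup is needed.
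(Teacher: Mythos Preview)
Your proof is correct and follows essentially the same contrapositive strategy as the paper: both assume $\cdot=\cdot'$ and use the equational axioms (Q) to deduce $\rd=\rd'$ and $\ld=\ld'$. The only cosmetic difference is that the paper writes the deduction as a single equational chain, namely $z\rd t=((z\rd t)\cdot' t)\rd' t=((z\rd t)\cdot t)\rd' t=z\rd' t$, whereas you phrase the same computation via the unique-solution characterization of right division.
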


\begin{proof}
Suppose that for every $x,y\in Q$, $x\cdot y= x\cdot' y$ holds.
Then for every $z,t\in Q$ we have
\[
z\rd t=((z\rd t)\cdot' t)\rd' t=((z\rd t)\cdot t)\rd' t)=z\rd' t.
\]
Analogously, we prove that for every $u,v\in Q$, $u\ld v=u\ld' v$.
Hence, $(Q; \cdot, \rd, \ld)$ and $(Q; \cdot', \rd', \ld')$ are
the same, which contradicts the assumption.
\end{proof}

\begin{lem}
The functor $\Delta$ is one-one on objects.
\end{lem}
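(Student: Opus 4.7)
The plan is to unpack what $\Delta \mathbb{Q} = \Delta \mathbb{Q}'$ buys us at the level of underlying data, and then reconstruct $\mathbb{Q}$ coordinate by coordinate. Suppose $\mathbb{Q} = (Q;\cdot,\rd,\ld)$ and $\mathbb{Q}' = (Q';\cdot',\rd',\ld')$ satisfy $\Delta \mathbb{Q} = \Delta \mathbb{Q}'$. First I would observe that this equality forces the underlying sets to coincide: the carrier of $\Delta \mathbb{Q}$ is $Q^3$ while the carrier of $\Delta \mathbb{Q}'$ is $Q'^3$, and $Q^3 = Q'^3$ as sets implies $Q = Q'$ (if $q \in Q$ then $(q,q,q) \in Q^3 = Q'^3$, whence $q \in Q'$, and symmetrically).

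Once the carriers agree, the equality of the two structures reduces to $\nabla_3 = \nabla_3'$ on $Q^3$. Now the crucial point is that the third component of $\nabla_3$ already reads off the multiplication: by the definition
\[
\bar{x}\,\nabla_3\,\bar{y} = (x_2 \rdd y_3,\; x_3 \ldd y_1,\; x_1 \cdot y_2),
\]
projecting onto the third coordinate gives $\pi_3(\bar{x}\,\nabla_3\,\bar{y}) = x_1\cdot y_2$ and, analogously for $\mathbb{Q}'$, $\pi_3(\bar{x}\,\nabla_3'\,\bar{y}) = x_1 \cdot' y_2$. From $\nabla_3 = \nabla_3'$ we therefore obtain $x\cdot y = x\cdot' y$ for all $x,y \in Q$.

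To finish, I invoke Lemma~\ref{dif}: since the multiplications $\cdot$ and $\cdot'$ on the common set $Q$ coincide, the two equational quasigroups $(Q;\cdot,\rd,\ld)$ and $(Q;\cdot',\rd',\ld')$ must be equal, i.e.\ $\mathbb{Q} = \mathbb{Q}'$. There is no real obstacle here; the only subtlety worth flagging is the reliance on Lemma~\ref{dif} to conclude that $\cdot$ alone determines $\rd$ and $\ld$ in this equational presentation, so that reading off just one projection of $\nabla_3$ is enough. (One could alternatively extract $\rdd$ and $\ldd$ from the first two projections of $\nabla_3$ and argue directly, but the route via Lemma~\ref{dif} is the shortest.)
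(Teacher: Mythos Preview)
Your proof is correct and follows essentially the same route as the paper's: both arguments separate the case of distinct carriers from the case $Q=Q'$, and in the latter case both use Lemma~\ref{dif} together with the fact that the third coordinate of $\nabla_3$ recovers the multiplication. The only cosmetic difference is that you argue the contrapositive (from $\Delta\mathbb{Q}=\Delta\mathbb{Q}'$ to $\mathbb{Q}=\mathbb{Q}'$), whereas the paper starts from $\mathbb{Q}\neq\mathbb{Q}'$ and exhibits a pair $(x,x,x),(y,y,y)$ on which the two $\nabla_3$'s disagree.
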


\begin{proof}
Suppose that $(Q; \cdot, \rd, \ld)$ and $(Q'; \cdot', \rd', \ld')$
are two different quasigroups. If $Q$ and $Q'$ are different sets,
then $\Delta \mathbb{Q}$ and $\Delta \mathbb{Q}'$ are different.
If $Q=Q'$, then, by Lemma~\ref{dif}, there are $x$ and $y$ in this
set such that $x\cdot y\neq x\cdot' y$. Hence, the operations
$\nabla_3$ for $\Delta \mathbb{Q}$ and $\Delta \mathbb{Q}'$ differ
when applied to $(x,x,x)$ and $(y,y,y)$.
\end{proof}

\vspace{2ex}

As a corollary of these two lemmata we have the following result.

\begin{prop}\label{embedding}
The category {\em{\textbf{Qtp}}} is isomorphic to a subcategory of
{\em{\textbf{P}}}; namely, to its image under the functor
$\Delta$.
\end{prop}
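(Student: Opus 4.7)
The plan is to package together the two preceding lemmas. Consider the subcategory \textbf{I} of \textbf{P} whose objects are exactly the semisymmetric quasigroups of the form $\Delta \mathbb{Q}$ for some object $\mathbb{Q}$ of \textbf{Qtp}, and whose arrows are exactly the homomorphisms of the form $\Delta(f_1,f_2,f_3)=f_1\times f_2\times f_3$ for some arrow $(f_1,f_2,f_3)$ of \textbf{Qtp}. I would first check that \textbf{I} really is a subcategory of \textbf{P}: since $\Delta$ is a functor, the identity on $\Delta \mathbb{Q}$ equals $\Delta(\mj_Q,\mj_Q,\mj_Q)$ and so lies in \textbf{I}, and the composite of two arrows of \textbf{I} is the $\Delta$-image of a composite in \textbf{Qtp}, again in \textbf{I}.

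Next I would corestrict $\Delta$ to $\bar\Delta\!:\textbf{Qtp}\str \textbf{I}$. By the very definition of \textbf{I}, $\bar\Delta$ is surjective on both objects and arrows. By the preceding lemma that $\Delta$ is one-one on objects, $\bar\Delta$ is injective on objects, and by Lemma~\ref{Dfaith} (faithfulness of $\Delta$) it is injective on arrows. So $\bar\Delta$ is a bijection on objects and on each hom-set. A functor that is bijective on objects and on arrows is an isomorphism of categories, with inverse obtained by sending each object of \textbf{I} to the unique quasigroup mapping to it and each arrow of \textbf{I} to the unique homotopy whose image under $\Delta$ it is.

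There is really no main obstacle here; the only subtlety worth checking is the claim that \textbf{I} is closed under composition and identities, and that is immediate from the functoriality of $\Delta$ established when $\Delta$ was defined in Section~\ref{adj}. Hence \textbf{Qtp} is isomorphic to the subcategory $\textbf{I}=\Delta(\textbf{Qtp})$ of \textbf{P}, as claimed.
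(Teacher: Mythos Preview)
Your proposal is correct and matches the paper's approach: the paper states Proposition~\ref{embedding} simply as a corollary of the two preceding lemmata (faithfulness of $\Delta$ and injectivity of $\Delta$ on objects), and you have spelled out precisely how those two facts combine to yield the isomorphism onto the image subcategory. No further argument is needed.
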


As we have shown by the second proof of Lemma~\ref{Dfaith},
Proposition~\ref{embedding} is independent of
Proposition~\ref{adjunction}. The adjunction, together with this
embedding of \textbf{Qtp} in \textbf{P}, says that the category
\textbf{P} reflects in \textbf{Qtp} in the following sense. A
subcategory \textbf{A} of \textbf{B} is \emph{reflective} in
\textbf{B}, when the inclusion functor from \textbf{A} to
\textbf{B} has a left adjoint called a \emph{reflector}. The
adjunction is called a \emph{reflection} of \textbf{B} in
\textbf{A}.

Propositions \ref{adjunction} and \ref{embedding} say that
\textbf{Qtp} may be considered as a reflective subcategory of
\textbf{P}. The functor $\Sigma$ is a reflector and the adjunction
between $\Sigma$ and $\Delta$ is a reflection of \textbf{P} in
\textbf{Qtp}. However, this does not mean that two quasigroups are
isotopic in \textbf{Qtp} if and only if their semisymmetrizations
are isomorphic in \textbf{P}, which one may conclude from
\cite[first paragraph in the introduction]{ImKoSm}. The reader
should be aware of this potential missusing of these results.

\section{Monadicity of $\Delta$}

For $F\!:\textbf{C}\str \textbf{D}$ a left adjoint for
$G\!:\textbf{D}\str \textbf{C}$, and $\eta$ and $\varepsilon$, the
unit and counit of this adjunction, a $GF$-algebra is a pair
$(C,h)$, where $C$ is an object of \textbf{C} and $h\!: GFC\str C$
is an arrow of \textbf{C} such that the following equalities hold.
\[
h\circ GF h=h\circ G\varepsilon_{FC},\quad\quad h\circ
\eta_C=\mj_C.
\]
A morphism of $GF$-algebras $(C,h)$ and $(C',h')$ is given by an
arrow $f\!:C\str C'$ of \textbf{C} such that $f\circ h=h'\circ
GFf$.

The category $\textbf{C}^{GF}$ has $GF$-algebras as objects and
morphisms of $GF$-algebras as arrows. The \emph{comparison
functor} $K\!:\textbf{D}\str \textbf{C}^{GF}$ is given by
\[
KD=(GD,G\varepsilon_D),\quad\quad Kf=Gf.
\]

In many cases the comparison functor is an isomorphism or an
equivalence (i.e.\ there is a functor from $\textbf{C}^{GF}$ to
\textbf{D} such that both compositions with $K$ are naturally
isomorphic to the identity functors). The right adjoint of an
adjunction or an adjunction are called \emph{monadic} when the
comparison functor is an isomorphism (see \cite[VI.3]{ML71}, also
\cite[Section~4.2]{SR99}). Some other authors (see
\cite[Section~3.3]{BW85}) call an adjunction monadic (tripleable)
when $K$ is just an equivalence.

In the case of adjoint situation involving $\Sigma$ and $\Delta$,
the comparison functor $K\!: \textbf{Qtp}\str
\textbf{P}^{\Delta\Sigma}$ is just an equivalence. To prove this,
by \cite[IV.4, Theorem~1]{ML71} it suffices to prove that $K$ is
full and faithful, and that every $GF$-algebra is isomorphic to
$K\mathbb{Q}$ for some quasigroup $\mathbb{Q}$. The faithfulness
of $K$ follows from \ref{Dfaith} since the arrow function $K$
coincides with the arrow function $\Delta$. That every
$GF$-algebra is isomorphic to $K\mathbb{Q}$ for some quasigroup
$\mathbb{Q}$ is proven in \cite[Section 10, Theorem~33]{Smith8}.

A functor $F\!:\textbf{C}\str \textbf{D}$ is \emph{full} when for
every pair of objects $C_1$ and $C_2$ of $\textbf{C}$ and every
arrow $g\!:FC_1\str FC_2$ of $\textbf{D}$ there is an arrow
$f\!:C_1\str C_2$ of $\textbf{C}$ such that $g=Ff$. It remains to
prove that $K$ is full. For this we use the following lemma.

\begin{lem}\label{lema1}
Every arrow of $\textbf{P}^{\Delta\Sigma}$ from $K\mathbb{Q}$ to
$K\mathbb{R}$ is of the form $f_1\times f_2\times f_3$, for
$(f_1,f_2,f_3)$ a homotopy from $\mathbb{Q}$ to $\mathbb{R}$.
\end{lem}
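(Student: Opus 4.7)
The plan is to unpack the algebra-morphism condition on $\alpha\!:\Delta\mathbb{Q}\str\Delta\mathbb{R}$ and observe that it forces $\alpha$ to split as a coordinatewise product $f_1\times f_2\times f_3$; then the fact that $\alpha$ is a \textbf{P}-homomorphism will give the homotopy equation for free.

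First I would make the structure maps of the algebras $K\mathbb{Q}$ and $K\mathbb{R}$ explicit. Since $\varepsilon_\mathbb{Q}=(\pi_1,\pi_2,\pi_3)$ and $\Delta$ sends a homotopy $(g_1,g_2,g_3)$ to the product $g_1\times g_2\times g_3$, the structure map is
\[
\Delta\varepsilon_\mathbb{Q}\!:(Q^3)^3\str Q^3,\qquad (\bar{x},\bar{y},\bar{z})\mapsto(x_1,y_2,z_3),
\]
and similarly for $\mathbb{R}$. Also $\Sigma\alpha=(\alpha,\alpha,\alpha)$, so $\Delta\Sigma\alpha=\alpha\times\alpha\times\alpha$ sends $(\bar{x},\bar{y},\bar{z})$ to $(\alpha(\bar{x}),\alpha(\bar{y}),\alpha(\bar{z}))$.

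Next, writing the components of $\alpha$ as $\alpha(\bar{u})=(\alpha_1(\bar{u}),\alpha_2(\bar{u}),\alpha_3(\bar{u}))$, I would evaluate the algebra-morphism equation $\alpha\circ\Delta\varepsilon_\mathbb{Q}=\Delta\varepsilon_\mathbb{R}\circ\Delta\Sigma\alpha$ at an arbitrary $(\bar{x},\bar{y},\bar{z})\in(Q^3)^3$, obtaining
\[
\alpha(x_1,y_2,z_3)=(\alpha_1(\bar{x}),\alpha_2(\bar{y}),\alpha_3(\bar{z})).
\]
Since $x_2,x_3,y_1,y_3,z_1,z_2$ are unconstrained while the left-hand side does not involve them, $\alpha_i(\bar{u})$ must depend only on $u_i$. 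Setting $f_i(u):=\alpha_i(u,u,u)$, this yields $\alpha=f_1\times f_2\times f_3$.

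Finally, to check that $(f_1,f_2,f_3)$ is a homotopy from $\mathbb{Q}$ to $\mathbb{R}$, I would use that $\alpha$ is a homomorphism in \textbf{P}, so $\alpha(\bar{x}\,\nabla_3\,\bar{y})=\alpha(\bar{x})\,\nabla_3\,\alpha(\bar{y})$ for all $\bar{x},\bar{y}\in Q^3$. Substituting the product form of $\alpha$ and comparing the third coordinates via the defining formula for $\nabla_3$ gives $f_3(x_1\cdot y_2)=f_1(x_1)\cdot f_2(y_2)$, which is exactly the homotopy condition; the other two coordinate comparisons produce the equivalent parastrophic identities listed in Section~\ref{Q}. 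The only genuine obstacle is the bookkeeping in the first step: once $\Delta\varepsilon_\mathbb{Q}$ is recognized as $\pi_1\times\pi_2\times\pi_3$, the product decomposition of $\alpha$ is forced and the homotopy equation drops out immediately.
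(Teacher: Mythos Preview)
Your proposal is correct and follows essentially the same route as the paper: identify the structure map $\Delta\varepsilon_\mathbb{Q}$ as $\pi_1\times\pi_2\times\pi_3$, use the algebra-morphism square to force $\alpha=f_1\times f_2\times f_3$, and then read off the homotopy identity from the third coordinate of the $\nabla_3$-homomorphism condition. The only cosmetic difference is that the paper plugs in the special element $((x,x,x),(y,y,y),(z,z,z))$ directly to obtain $\alpha(x,y,z)=(f_1(x),f_2(y),f_3(z))$ in one stroke, whereas you evaluate at a general triple and then argue that the unconstrained coordinates force each $\alpha_i$ to depend only on $u_i$; both arrive at the same conclusion.
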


\begin{proof}
For quasigroups $\mathbb{Q}$ and $\mathbb{R}$ we have that
$K\mathbb{Q}=(\Delta \mathbb{Q},\pi_1\times\pi_2\times\pi_3)$ and
$K\mathbb{R}=(\Delta \mathbb{R},\pi_1\times\pi_2\times\pi_3)$. So,
let
\[
f\!:(\Delta \mathbb{Q},\pi_1\times\pi_2\times\pi_3)\str (\Delta
\mathbb{R},\pi_1\times\pi_2\times\pi_3)
\]
be an arrow of $\textbf{P}^{\Delta\Sigma}$. Since $f$ is a
morphism of $\Delta\Sigma$-algebras, we have that
\[
f\circ(\pi_1\times\pi_2\times\pi_3)=
(\pi_1\times\pi_2\times\pi_3)\circ (f\times f\times f)
\]
as functions from $(Q^3)^3$ to $R^3$.

For $i\in\{1,2,3\}$ and $u\in Q$, let $f_i(u)=\pi_i(f(u,u,u))$.
Let $(x,y,z)$ be an arbitrary element of $Q^3$. Apply the both
sides of the above equality to $((x , x , x ), ( y , y , y ), ( z
, z , z ))\in (Q^3)^3$ in order to obtain
\begin{tabbing}
\hspace{1.5em}\= $f(x,y,z)$ \= $=(\pi_1(f(x,x,x)),
\pi_2(f(y,y,y)), \pi_3(f(z,z,z)))$
\\[1ex]
\>\> $=(f_1(x),f_2(y),f_3(z))$.
\end{tabbing}
Hence, $f=f_1\times f_2\times f_3$ and since it is a homomorphism
from $\Delta \mathbb{Q}$ to $\Delta \mathbb{R}$, we have for every
$\bar{x},\bar{y}\in Q^3$
\[
(f_1\times f_2\times f_3) (\bar{x})\;\nabla_3\; (f_1\times
f_2\times f_3) (\bar{y})=(f_1\times f_2\times
f_3)(\bar{x}\;\nabla_3\; \bar{y}).
\]
By restricting this equality to the third component, we obtain
$f_1(x_1)\cdot f_2(y_2)=f_3(x_1\cdot y_2)$, and hence
$(f_1,f_2,f_3)$ is a homotopy from $\mathbb{Q}$ to $\mathbb{R}$.
\end{proof}


\section{A new semisymmetrization}\label{new}

In this section we introduce a new semisymmetrization functor from
\textbf{Qtp} to \textbf{P}. This leads to another subcategory of
\textbf{P} isomorphic to \textbf{Qtp}. We start with an auxiliary
result.

\begin{lem}\label{homotopy}
The third component $f_3$ of a homotopy is determined by the first
two components $f_1$ and $f_2$.
\end{lem}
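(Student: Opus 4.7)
The plan is to prove a uniqueness statement: if $(f_1,f_2,f_3)$ and $(f_1,f_2,f_3')$ are both homotopies from $\mathbb{Q}$ to $\mathbb{R}$ with the same first two components, then $f_3=f_3'$. The only ingredient I would need from the quasigroup structure is that the multiplication $\cdot\!:Q\times Q\to Q$ is surjective; indeed, for any $z\in Q$, fixing any $x\in Q$, one of the quasigroup axioms guarantees a (unique) $y\in Q$ with $x\cdot y=z$.

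Given this, the argument is essentially immediate. For an arbitrary $z\in Q$, pick any $x,y\in Q$ with $x\cdot y=z$ and apply the homotopy identity to both triples:
\[
f_3(z)=f_3(x\cdot y)=f_1(x)\cdot f_2(y)=f_3'(x\cdot y)=f_3'(z).
\]
Since $z$ was arbitrary, $f_3=f_3'$.

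There is no real obstacle; the proof is a one-liner once surjectivity of $\cdot$ is in hand. A small subtlety worth flagging is that the lemma does not claim one can \emph{define} $f_3$ from arbitrary functions $f_1,f_2$ by the rule $f_3(z)=f_1(x)\cdot f_2(y)$ for some factorization $z=x\cdot y$; in general such a rule would depend on the chosen factorization. The lemma asserts only that if some $f_3$ completing $(f_1,f_2)$ to a homotopy exists, then it is unique, and the well-definedness of the right-hand side is then a consequence of the existence of a completing $f_3$ rather than an assumption on $f_1,f_2$.
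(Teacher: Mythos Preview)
Your proof is correct and matches the paper's approach exactly: both use surjectivity of $\cdot$ (the paper writes $x=y\cdot(y\ld x)$ explicitly) to conclude $f_3(x)=f_1(y)\cdot f_2(z)$ whenever $x=y\cdot z$. Your added remark clarifying that this is a uniqueness claim rather than a constructive definition is a useful observation not made explicit in the paper.
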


\begin{proof}
Let $\mathbb{Q}$ be a quasigroup. For every element $x\in Q$ there
are $y,z\in Q$ such that $x=y\cdot z$ (e.g.\ $x=y\cdot(y\ld x)$).
Hence, $f_3(x)=f_1(y)\cdot f_2(z)$.
\end{proof}

Let $\Gamma\!:\textbf{Qtp}\str \textbf{P}$ be a functor defined on
objects so that $\Gamma \mathbb{Q}$ is a semisymmetric quasigroup
$(Q^2;\nabla)$ (see Section~\ref{biq}) whose elements are pairs
$(x_1,x_2)$, abbreviated by $\hat{x}$, and $\nabla$ is defined so
that
\[
(x_1,x_2)\nabla(y_1,y_2)=(x_2\rdd(x_1\cdot y_2),(x_1\cdot y_2)\ldd
y_1).
\]
(It is straightforward to check that $(\hat{y}\nabla
\hat{x})\nabla \hat{y}=\hat{y}\nabla(\hat{x}\nabla
\hat{y})=\hat{x}$, hence $\Gamma \mathbb{Q}$ is a semisymmetric
quasigroup.)

A homotopy $(f_1,f_2,f_3)$ is mapped by $\Gamma$ to the product
$f_1\times f_2$, which is a homomorphism:
\begin{tabbing}
\hspace{1.5em}$(f_1\times f_2) (\hat{x})\;\nabla\; (f_1\times f_2)
(\hat{y})=$
\\[1ex]
\hspace{11em}\=$=(f_2(x_2)\rdd(f_1(x_1)\cdot
f_2(y_2)),(f_1(x_1)\cdot f_2(y_2))\ldd f_1(y_1))$
\\[1ex]
\>$=(f_1(x_2\rdd(x_1\cdot y_2)),f_2((x_1\cdot y_2)\ldd y_1))$
\\[1ex]
\>$=(f_1\times f_2)(\hat{x}\nabla \hat{y})$.
\end{tabbing}
By the functoriality of product, we have that $\Gamma$ preserves
identities and composition, and it is indeed a functor.

The functor $\Gamma$ is not a right adjoint for $\Sigma$ since a
right adjoint is unique up to isomorphism and $\Gamma \mathbb{Q}$
is not isomorphic to $\Delta \mathbb{Q}$ for every object
$\mathbb{Q}$ of \textbf{Qtp}. However, this adjunction is not
necessary for the faithfulness of $\Gamma$.

\begin{lem}
The functor $\Gamma$ is faithful.
\end{lem}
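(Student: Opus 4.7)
The plan is to reduce faithfulness of $\Gamma$ to the preceding Lemma~\ref{homotopy}. Suppose $(f_1,f_2,f_3)$ and $(g_1,g_2,g_3)$ are two homotopies from $\mathbb{Q}$ to $\mathbb{R}$ such that $\Gamma(f_1,f_2,f_3)=\Gamma(g_1,g_2,g_3)$, i.e.\ $f_1\times f_2=g_1\times g_2$ as functions from $Q^2$ to $R^2$.

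First I would project: composing both sides with $\pi_1$ and $\pi_2$ respectively (or equivalently evaluating at arbitrary pairs $(x,y)\in Q^2$ and reading off the first and second components) yields $f_1=g_1$ and $f_2=g_2$ as functions from $Q$ to $R$. This is the same trick used in the second proof of Lemma~\ref{Dfaith}, only now with a binary rather than a ternary product.

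The nontrivial step is to recover $f_3=g_3$ from the equalities $f_1=g_1$ and $f_2=g_2$, and this is exactly what Lemma~\ref{homotopy} delivers: for any $x\in Q$, write $x=y\cdot z$ for some $y,z\in Q$ (taking $y$ arbitrary and $z=y\ld x$), and then the homotopy identity forces
\[
f_3(x)=f_1(y)\cdot f_2(z)=g_1(y)\cdot g_2(z)=g_3(x).
\]
Hence $(f_1,f_2,f_3)=(g_1,g_2,g_3)$ in \textbf{Qtp}, and $\Gamma$ is faithful.

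I don't foresee a real obstacle here; the whole point of placing Lemma~\ref{homotopy} immediately before this statement is to make the argument essentially a one-liner, and the main content of the section is that only two components of a homotopy are needed to reconstruct the homomorphism, so faithfulness costs nothing beyond what has already been established.
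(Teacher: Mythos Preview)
Your proposal is correct and follows essentially the same approach as the paper: from $f_1\times f_2=g_1\times g_2$ you read off $f_1=g_1$, $f_2=g_2$ (as in the second proof of Lemma~\ref{Dfaith}), and then invoke Lemma~\ref{homotopy} to conclude $f_3=g_3$. The paper's proof is the same one-liner, only without spelling out the projection step or the content of Lemma~\ref{homotopy}.
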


\begin{proof}
We proceed as in the second proof of Lemma~\ref{Dfaith}. If
$(f_1,f_2,f_3)$ and $(g_1,g_2,g_3)$ are two homotopies from
$\mathbb{Q}$ to $\mathbb{R}$, then
$\Gamma(f_1,f_2,f_3)=\Gamma(g_1,g_2,g_3)$ means that $f_1\times
f_2=g_1\times g_2$. Hence, $f_1=g_1$ and $f_2=g_2$, and by
Lemma~\ref{homotopy}, $f_3=g_3$.
\end{proof}

The functor $\Gamma$, as defined, is not one-one on objects. For
example,
\[
(\{0,1\},+,+,+)\quad \mbox{\rm and} \quad
(\{0,1\},\oplus,\oplus,\oplus),
\]
where $+$ is addition mod 2 and $x \oplus y = x+y+1$\/,
are mapped by $\Gamma$ to
the same object of \textbf{P}. To remedy this matter, one may
redefine $\Gamma$ so that
\[
\Gamma \mathbb{Q}=(Q^2\times\{\mathbb{Q}\},\nabla),
\]
where $\mathbb{Q}$, as the third component of every element,
guarantees that $\Gamma$ is one-one on objects. The operation
$\nabla$ is defined as above, just neglecting the third component.
Hence, \textbf{Qtp} may be considered as another subcategory of
\textbf{P}.

\section*{\centerline{\small Acknowledgements}}

We are grateful to J.D.H.\ Smith for his comments on the previous
version of this note. Work on this paper was supported by the
Ministry of Education and Science of Serbia through grants
ON174008 (first author) and ON174026 (both authors). \small

\vspace{1cm}
\noindent
{Mathematical Institute \\
         of the Serbian Academy of Sciences and Arts\\
         Knez Mihailova 36\\
         11001 Beograd \\
         Serbia \\
e-mail: \{sasa,zpetric\}@mi.sanu.ac.rs }

\end{document}